\newtheorem{theorem}{Theorem}[section]
\newtheorem{proposition}[theorem]{Proposition}
\newtheorem{corollary}[theorem]{Corollary}
\newtheorem{lemma}[theorem]{Lemma}
\newtheorem{definition}[theorem]{Definition}
\newtheorem{remark}[theorem]{Remark}
\newcommand{\rwc}{R} 
\DeclareMathOperator{\rws}{RW} 
\newcommand{\Alpha}{A}
\DeclareMathOperator{\Factor}{Fac}
\DeclareMathOperator{\Pal}{Pal}
\newcommand{\ff}{f} 
\newcommand{\cc}{c} 
\newcommand{\TT}{T} 
\DeclareMathOperator{\LUF}{luf} 
\begin{document}

\title{Property of upper bounds on the number of rich words}

\author{Josef Rukavicka\thanks{Department of Mathematics,
Faculty of Nuclear Sciences and Physical Engineering, Czech Technical University in Prague
(josef.rukavicka@seznam.cz).}}

\date{\small{December 18, 2022}\\
   \small Mathematics Subject Classification: 68R15}

\maketitle

\begin{abstract}
A finite word $w$ is called \emph{rich} if it contains $\vert w\vert+1$ distinct palindromic factors including the empty word. 
Let $q\geq 2$ be the size of the alphabet. Let $R(n)$ be the number of rich words of length $n$. Let $d>1$ be a real constant and let $\phi, \psi$ be real functions such that
\begin{itemize}\item there is $n_0$ such that $2\psi(2^{-1}\phi(n))\geq d\psi(n)$ for all $n>n_0$, \item $\frac{n}{\phi(n)}$ is an upper bound on the palindromic length of rich words of length $n$, and
\item $\frac{x}{\psi(x)}+\frac{x\ln{(\phi(x))}}{\phi(x)}$ is a strictly increasing concave function.
\end{itemize}
We show that if $c_1,c_2$ are real constants and 
$R(n)\leq q^{c_1\frac{n}{\psi(n)}+c_2\frac{n\ln(\phi(n))}{\phi(n)}}$ then for every real constant $c_3>0$ there is a positive integer $n_0$ such that for all $n>n_0$ we have that
\[R(n)\leq q^{(c_1+c_3)\frac{n}{d\psi(n)}+c_2\frac{n\ln(\phi(n))}{\phi(n)}(1+\frac{1}{c_2\ln{q}}+c_3)}\mbox{.}\]

\end{abstract}
\textbf{Keywords:} Rich words; Enumeration; Palindromic factorization; 

\section{Introduction}
A finite word is a sequence of letters of some given alphabet. A finite word $w$ is called a \emph{palindrome} if the word $w$ is equal to its reversal; for example ``radar'' or ``elle''. A finite word $w$ is called \emph{rich} if it contains $\vert w\vert+1$ distinct palindromic factors including the empty word. In the recent years, quite many articles can be found that deal with rich words; see, for instance, \cite{BuLuGlZa2, DrJuPi, GlJuWiZa, RUKAVICKA2021richext}.

One of the open questions being researched is the enumeration of rich words. 
Let $\rwc(n)$ denote the number of rich words of length $n$. In \cite{RukavickaRichWords2017}, it has been proved that the function $\rwc(n)$ grows subexponentially; formally \begin{equation}\label{ud789eehrk9}\lim_{n\rightarrow\infty}(\rwc(n))^{\frac{1}{n}}=1\mbox{.}\end{equation} 
It implies that for every real constant $c>1$ we have that 
\[\lim_{n\rightarrow\infty}\frac{\rwc(n)}{c^{n}}=0\mbox{.}\]

The formula (\ref{ud789eehrk9}) presents the currently best known upper bound on the number of rich words. In \cite{GuShSh15} it was conjectured for rich words on the binary alphabet that for some infinitely growing function $g(n)$ the following holds true  \[{\rwc(n)} = \mathcal{O} \Bigl(\frac{n}{g(n)}\Bigr)^{\sqrt{n}}\mbox{.}\]

The result (\ref{ud789eehrk9}) implies that there is subexponential upper bound on $\rwc(n)$. However currently we have no example a subexponential upper bound on the number of rich words $\rwc(n)$. In the current paper we present a property of an upper bound on $\rwc(n)$ that could give a hint how to derive a subexponential upper bound on $\rwc(n)$. To be able to formally state our main result, we introduce some more notation.

In \cite[Definition $4$ and Proposition $3$]{DrJuPi} it was proved that the longest palindromic suffix of a rich word $w$ has exactly one occurrence in $w$. Using this property it was proved that:

\begin{lemma} (see \cite[Lemma $1$]{RukavickaRichWords2017}) 
\label{lemma_A}
Let $w$ be a finite nonempty rich word. There exist distinct nonempty palindromes $w_1,w_2,\dots,w_p$ such that
\[\begin{split} 
w=w_pw_{p-1}\cdots w_1
 \mbox{ and }w_i \mbox{ is the longest palindromic suffix of }\\ w_pw_{p-1}\cdots w_i \mbox{ for }i=1,2,\dots ,p\mbox{.}
\end{split}\]
\end{lemma}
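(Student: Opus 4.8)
The plan is to produce the factorization by a greedy ``peeling'' of longest palindromic suffixes and then to derive distinctness from the unique-occurrence property quoted just above the lemma. First I would construct the blocks iteratively. Set $v_1=w$ and let $w_1$ be the longest palindromic suffix of $v_1$; having defined $w_1,\dots,w_{i-1}$ and the prefix $v_i$ obtained from $w$ by deleting the suffix $w_{i-1}\cdots w_1$, let $w_i$ be the longest palindromic suffix of $v_i$. Since $w$ is nonempty, every $v_i$ that arises is nonempty, and its longest palindromic suffix is nonempty because a single letter is already a palindrome; hence each step strictly decreases the length of the remaining prefix. After finitely many steps, say $p$, the remaining prefix is empty, and we obtain $w=w_pw_{p-1}\cdots w_1$ with each $w_i$ a nonempty palindrome. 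By construction $v_i=w_pw_{p-1}\cdots w_i$ and $w_i$ is the longest palindromic suffix of $v_i$, which is exactly the property required in the statement.

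Next I would record the one structural input that makes the cited unique-occurrence property applicable, namely that every factor of a rich word is rich, so in particular every prefix of $w$ is rich. Each $v_i=w_pw_{p-1}\cdots w_i$ is a prefix of $w$, hence rich, and therefore by the result from \cite{DrJuPi} quoted above, its longest palindromic suffix $w_i$ has exactly one occurrence in $v_i$.

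The crux, and the step I expect to be the main obstacle, is proving that the palindromes $w_1,\dots,w_p$ are pairwise distinct; this is precisely where the unique-occurrence property is exploited, and it requires careful bookkeeping of the indices. Suppose for contradiction that $w_i=w_j$ with $i<j$. Because $i<j$, the word $v_j=w_p\cdots w_j$ is a proper prefix of $v_i=w_p\cdots w_jw_{j-1}\cdots w_i$. The palindrome $w_j$ occurs as a suffix of $v_j$, hence as an occurrence inside $v_i$ ending at position $\vert v_j\vert<\vert v_i\vert$; on the other hand $w_i$ occurs as a suffix of $v_i$, ending at position $\vert v_i\vert$. Under the assumption $w_i=w_j$ these are two occurrences of one and the same word $w_i$ at distinct end positions within $v_i$, contradicting the fact that the longest palindromic suffix $w_i$ of the rich word $v_i$ occurs exactly once in $v_i$. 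Hence all the $w_i$ are distinct, which completes the proof. The delicate points to verify are that $v_j$ is genuinely a proper prefix of $v_i$ when $i<j$ and that comparing end positions indeed yields two distinct occurrences.
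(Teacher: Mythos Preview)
The paper does not actually prove this lemma; it is quoted verbatim from \cite[Lemma~1]{RukavickaRichWords2017} and no proof is given here, so there is nothing in the present paper to compare against. Your argument is correct and is exactly the standard one: greedy removal of longest palindromic suffixes gives the factorization, closure of richness under taking factors guarantees each prefix $v_i=w_p\cdots w_i$ is rich, and the unioccurrence of the longest palindromic suffix in a rich word (the result from \cite{DrJuPi} cited just before the lemma) forces the $w_i$ to be pairwise distinct via the two-occurrence contradiction you describe; your index bookkeeping ($i<j\Rightarrow v_j$ a proper prefix of $v_i$) is correct.
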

Based on the Lemma \ref{lemma_A}, the so-called \emph{UPS-factorization} and the function $\LUF(w)$ (the length of UPS-factorization) were introduced.
\begin{definition}(see \cite[Definition $2$]{RukavickaRichWords2017} and \cite[Definition $1.2$]{RUKAVICKA202295}) 
We define UPS-factorization (Unioccurrent Palindromic Suffix factorization) to be the factorization of a finite nonempty rich word $w$ into the form $w=w_pw_{p-1}\cdots w_1$ from Lemma \ref{lemma_A}.
Let $\LUF(w)=p$ be \emph{the length of UPS-factorization} of the rich word $w$.
\end{definition}

We define a set of strictly increasing concave functions.   
Let \[\begin{split}\Delta=\{\ff:\mathbb{R}^+\rightarrow\mathbb{R}^+\mid \ff'(x), \ff''(x)\mbox{ exist and }\ff'(x)>0 \\ \mbox{ and }
\ff''(x)<0
\mbox{ for all }x\geq 1\}\mbox{.}\end{split}\]

We define another set of functions. Let $\phi\in\Delta$. and let \[\begin{split}\TT_{\phi}=\{\psi:\mathbb{R}^+\rightarrow\mathbb{R}^+\mid \psi(x)\leq x\mbox{ and }\ff\in\Delta\mbox{, where }\\ \ff(x)=\frac{x}{\psi(x)}+\frac{x\ln(\phi(x))}{\phi(x)}\mbox{ for all }x\geq 1\}\mbox{.}\end{split}\]

Let $q\geq 2$ be the size of the alphabet, let $d>1$ be a real constant, and let $\phi\in\Delta, \psi\in\TT_{\phi}$ be such that
\begin{itemize}\item there is $n_0$ such that $2\psi(2^{-1}\phi(n))\geq d\psi(n)$ for all $n>n_0$ and \item $\frac{n}{\phi(n)}\geq \LUF(w)$ for every rich word with $\vert w\vert=n$; it means that $\frac{n}{\phi(n)}$ is an upper bound on the palindromic length of rich words of length $n$.
\end{itemize}

Our main result is the following theorem that allows to improve an upper bound on $\rwc(n)$.
\begin{theorem}
\label{djufirkf9dd}
We show that if $c_1,c_2$ are real constants and 
\[R(n)\leq q^{c_1\frac{n}{\psi(n)}+c_2\frac{n\ln(\phi(n))}{\phi(n)}}\] then for every real constant $c>0$ there is a positive integer $n_0$ such that for all $n>n_0$ we have that
\[R(n)\leq q^{(c_1+c_3)\frac{n}{d\psi(n)}+c_2\frac{n\ln(\phi(n))}{\phi(n)}(1+\frac{1}{c_2\ln{q}}+c_3)}\mbox{.}\]
\end{theorem}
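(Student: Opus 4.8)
The plan is to reduce the enumeration at length $n$ to the enumeration at length about $n/2$, using the UPS‑factorization together with the palindromic symmetry of its factors, and then to feed the assumed bound into that reduction.

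First I would fix a rich word $w$ of length $n$ and write its UPS‑factorization $w=w_pw_{p-1}\cdots w_1$ from Lemma~\ref{lemma_A}; by the second bullet hypothesis $p=\LUF(w)\le n/\phi(n)$. Put $\ell_i=|w_i|$, so $\ell_1+\cdots+\ell_p=n$. Each $w_i$ is a palindrome, hence is recovered from its prefix of length $\lceil\ell_i/2\rceil$ together with at most one ``middle'' letter, and each such prefix is a factor of $w$, hence rich. Thus $w$ is determined by the data: the integer $p$, the composition $(\ell_1,\dots,\ell_p)$ of $n$, the middle letters, and the list of prefixes. The point I expect to be the main obstacle is that this list of prefixes is not an arbitrary tuple of rich words: the concatenation of the prefixes, read in the order they occur in $w$, is itself a rich word, of length $m:=\sum_i\lceil\ell_i/2\rceil\le\lceil n/2\rceil$. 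This is precisely the kind of ``compression of the UPS‑factorization'' underlying the subexponential bound of \cite{RukavickaRichWords2017}; I would either cite it from there or reprove it by tracking longest palindromic suffixes along prefixes as in the proof of Lemma~\ref{lemma_A}. Granting it and using that $R$ is non‑decreasing,
\[R(n)\ \le\ \Bigl(\sum_{p=1}^{\lfloor n/\phi(n)\rfloor}\binom{n-1}{p-1}q^{p}\Bigr)\cdot R(\lceil n/2\rceil)\mbox{,}\]
where $\binom{n-1}{p-1}$ counts the compositions and $q^{p}$ the middle letters.

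Next I would estimate the combinatorial prefactor. Since $\binom{n-1}{p-1}q^{p}$ has ratios of consecutive terms exceeding $q(\phi(n)-1)\ge 2$ for large $n$, a geometric‑tail estimate bounds the sum by twice its last term, so its base‑$q$ logarithm is at most $1+\log_q\binom{n}{\lfloor n/\phi(n)\rfloor}+\tfrac{n}{\phi(n)}$, and by the entropy bound $\ln\binom{n}{k}\le nH(k/n)$ this is at most
\[\frac{n\ln(\phi(n))}{\phi(n)\ln q}\ +\ \Bigl(2+\tfrac1{\ln q}\Bigr)\frac{n}{\phi(n)}\ +\ 1\mbox{.}\]
The first term here is exactly the correction $\tfrac1{c_2\ln q}\cdot c_2\frac{n\ln(\phi(n))}{\phi(n)}$ appearing in the conclusion; since $\phi(n)\to\infty$ the remaining terms are $o\bigl(\frac{n\ln(\phi(n))}{\phi(n)}\bigr)$ and will be absorbed into the $c_3$ slack. (If $\phi$ does not tend to infinity then, being increasing, it is bounded, $\frac{n\ln(\phi(n))}{\phi(n)}$ grows linearly, and the asserted bound is already implied by the subexponential growth~(\ref{ud789eehrk9}); so one may assume $\phi(n)\to\infty$.)

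Finally I would insert the hypothesis $R(\lceil n/2\rceil)\le q^{c_1\frac{\lceil n/2\rceil}{\psi(\lceil n/2\rceil)}+c_2\frac{\lceil n/2\rceil\ln(\phi(\lceil n/2\rceil))}{\phi(\lceil n/2\rceil)}}$ and massage the exponent. For the second summand, monotonicity of $\phi$ together with the fact that $x\mapsto x/\phi(x)$ is non‑decreasing (from concavity of $\phi$, which is part of $\phi\in\Delta$) gives $\frac{\lceil n/2\rceil\ln(\phi(\lceil n/2\rceil))}{\phi(\lceil n/2\rceil)}\le\frac{n\ln(\phi(n))}{\phi(n)}$, contributing the unchanged term $c_2\frac{n\ln(\phi(n))}{\phi(n)}$. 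For the first summand, $\lceil n/2\rceil\ge\phi(n)/2$ because $\phi(n)\le n$, so by monotonicity of $\psi$ and the first bullet hypothesis $2\psi(\phi(n)/2)\ge d\psi(n)$ one gets $\frac{\lceil n/2\rceil}{\psi(\lceil n/2\rceil)}\le\frac{2\lceil n/2\rceil}{d\psi(n)}\le\frac{n+1}{d\psi(n)}$, which for $n$ large is $\le\frac{c_1+c_3}{c_1}\cdot\frac{n}{d\psi(n)}$. Combining the three contributions and choosing $n_0$ large enough (depending on $c_1,c_2,c_3,d,q,\psi,\phi$) so that the lower‑order terms are dominated by $c_3\frac{n}{d\psi(n)}+c_2c_3\frac{n\ln(\phi(n))}{\phi(n)}$ — this final bookkeeping is the second delicate point, and it is where the monotonicity and concavity properties packaged in $\Delta$ and $\TT_{\phi}$ (in particular the specific inequality $2\psi(\phi(n)/2)\ge d\psi(n)$) are really used — yields
\[R(n)\le q^{(c_1+c_3)\frac{n}{d\psi(n)}+c_2\frac{n\ln(\phi(n))}{\phi(n)}\bigl(1+\frac1{c_2\ln q}+c_3\bigr)}\mbox{,}\]
which is the assertion. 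The step I would expect to take the most work is establishing (or correctly invoking) the compression lemma that collapses the list of half‑factors into a single rich word of length $\le\lceil n/2\rceil$; without it one is left with a product $\prod_iR(\lceil\ell_i/2\rceil)$ whose logarithm, summed over compositions, overshoots the $c_2$‑term.
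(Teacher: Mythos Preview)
Your argument hinges on a ``compression lemma'' --- that concatenating the half-prefixes of the UPS factors yields a single rich word of length $\le\lceil n/2\rceil$ --- which you attribute to \cite{RukavickaRichWords2017}. It is neither there nor true. Theorem~\ref{label_th_D} from that paper is precisely the product recurrence you dismiss in your last paragraph, and the lemma itself already fails on three letters: $w=abacac$ is rich with UPS-factorization $w_2w_1=(aba)(cac)$, but the concatenation of half-prefixes $ab\cdot ca=abca$ has only four palindromic factors and is not rich.

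Your closing diagnosis, that the product $\prod_iR(\lceil\ell_i/2\rceil)$ ``overshoots the $c_2$-term'', is exactly backwards. The paper works directly with that product: taking base-$q$ logarithms turns it into $\sum_i f(\lceil\ell_i/2\rceil)$ where $f(x)=c_1x/\psi(x)+c_2x\ln(\phi(x))/\phi(x)$, and the hypothesis $\psi\in\TT_{\phi}$ was chosen precisely so that $f\in\Delta$ is concave. Jensen (Corollary~\ref{d8989f8jedj}) then gives $\sum_i f(\lceil\ell_i/2\rceil)\le p\,f\bigl(\tfrac{n}{2p}+1\bigr)$, and this is increasing in $p$ (Proposition~\ref{dy8fiifuf99}), hence maximized at $p=\tau(n)=n/\phi(n)$, yielding $\tau(n)\,f\bigl(2^{-1}\phi(n)+1\bigr)$. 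That is where the argument $2^{-1}\phi(n)$ of $\psi$ in the first bullet hypothesis actually comes from --- not from evaluating $R$ at $\lceil n/2\rceil$, but from splitting $n/2$ evenly into $\tau(n)$ parts. The composition count is handled by Lemma~\ref{cbdjd87ejff} and produces the $\tfrac{1}{c_2\ln q}$ contribution, as you correctly identified. A secondary gap: even granting your compression, the step $\psi(\lceil n/2\rceil)\ge\psi(\phi(n)/2)$ assumes $\psi$ is nondecreasing, which is not among the stated hypotheses.
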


\begin{remark}
The currently best known upper bound on the function $\LUF$ has been shown in \cite{RUKAVICKA202295}: There are real positive constants $\mu, \pi$ such that if $w$ is a finite nonempty rich word $n=\vert w\vert$ then \[\LUF(w)\leq \mu\frac{n}{e^{\pi\sqrt{\ln{n}}}}\mbox{.}\]
\end{remark}

\begin{remark}
Note that Theorem \ref{djufirkf9dd} improves the upper bound $q^{c_1\frac{n}{\psi(n)}+c_2\frac{n\ln(\phi(n))}{\phi(n)}}$ if
$c_1\frac{n}{\psi(n)}>c_2\frac{n\ln(\phi(n))}{\phi(n)}\mbox{}$ and
$\frac{c_1+c_3}{d}<c_1$.
\end{remark}

\section{Preliminaries}
Let $\mathbb{N}_0$ denote the set of all non-negative integers, let $\mathbb{N}^+$ denote the set of all positive integers, let $\mathbb{R}$ denote the set of all real numbers, and let $\mathbb{R}^+$ denote the set of all positive real numbers. 

Let $\Alpha$ denote a finite alphabet and let $q=\vert \Alpha\vert$. Let $\Alpha^n$ denote the set of all words of length $n\in\mathbb{N}^+$ over the alphabet $\Alpha$. Let $\Alpha^+=\bigcup_{i\geq 1}\Alpha^i$, let $\epsilon$ denote the empty word, and let $\Alpha^*=\{\epsilon\}\cup\Alpha^+$.  

Let $\Pal\subseteq\Alpha^*$ denote the set of all palindromes and let $\rws\subseteq \Alpha^*$ denote the set of all rich words. We have that $\epsilon\in\Pal\cap\rws$.

Let $\Factor_w\subseteq \Alpha^*$ be the set of all finite factors of the word $w\in\Alpha^*\cup\Alpha^{\infty}$. 
Let $\Factor_w(n)=\vert \Factor_w\cap\Alpha^n\vert$. The function $\Factor_w(n)$ is the \emph{factor complexity} of the word $w$.

\section{Previous results}

Let 
$\kappa(n)=\left\lceil\cc\frac{n}{\ln{n}}\right\rceil$. Using $\kappa(n)$, the following recurrence relation for the number of rich words is known. 
\begin{theorem}(see \cite[Theorem $7$]{RukavickaRichWords2017})
\label{label_th_D}
If $n\geq 2$, then
\begin{equation*}
R(n)\leq \sum_{p=1}^{\kappa(n)}\sum_{\substack{n_1+n_2+\dots +n_p=n \\ n_1,n_2,\dots, n_p\geq 1}}R\left(\left\lceil\frac{n_1}{2}\right\rceil\right)R\left(\left\lceil\frac{n_2}{2}\right\rceil\right)\cdots R\left(\left\lceil\frac{n_p}{2}\right\rceil\right)\mbox{.}
\end{equation*}
\end{theorem}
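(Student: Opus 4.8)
The plan is to substitute the hypothesised bound $R(m)\le q^{g(m)}$, where $g(m)=c_1\frac{m}{\psi(m)}+c_2\frac{m\ln\phi(m)}{\phi(m)}$, into the recurrence of Theorem~\ref{label_th_D}. Re-running its proof with the sharper cap supplied by the hypothesis $\LUF(w)\le n/\phi(n)$ lets me replace the summation bound $\kappa(n)$ by $P=\lfloor n/\phi(n)\rfloor$, so that
\[R(n)\le\sum_{p=1}^{P}\ \sum_{\substack{n_1+\cdots+n_p=n\\ n_1,\dots,n_p\ge1}} q^{\sum_{i=1}^{p} g(\lceil n_i/2\rceil)}\,.\]
I would then bound the right-hand side by the number of compositions times the single largest exponential term, and estimate these two pieces separately.

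For the combinatorial factor I would use $\sum_{p=1}^{P}\binom{n-1}{p-1}\le n\binom{n}{P}$ together with $\ln\binom{n}{P}\le\frac{n}{\phi(n)}\bigl(1+\ln\phi(n)\bigr)$. Because $\phi(n)\to\infty$, both $\frac{n}{\phi(n)}$ and $\ln n$ are of smaller order than $\frac{n\ln\phi(n)}{\phi(n)}$, so this factor is at most $q^{\frac{1}{\ln q}\frac{n\ln\phi(n)}{\phi(n)}(1+o(1))}$. For $n$ large the $o(1)$ is absorbed, producing exactly the summand $\frac{1}{\ln q}\frac{n\ln\phi(n)}{\phi(n)}$ of the target (this is the origin of the $\frac{1}{c_2\ln q}$ inside the second coefficient) together with a share of the $c_2c_3\frac{n\ln\phi(n)}{\phi(n)}$ slack.

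The heart of the proof is the estimate of $M=\max_{p\le P}\max_{n_1+\cdots+n_p=n}\sum_{i=1}^{p}g(\lceil n_i/2\rceil)$. Here I would invoke that $f(x)=\frac{x}{\psi(x)}+\frac{x\ln\phi(x)}{\phi(x)}$ is concave (this is precisely the requirement $\psi\in\TT_{\phi}$): concavity forces the inner maximum to be attained at the balanced composition $n_i\approx n/p$, the ceilings contributing only a lower-order shift. At the balanced composition $\sum_{i}g(\lceil n_i/2\rceil)=p\,g(s)$ with $s\approx\frac{n}{2p}\ge\frac{\phi(n)}{2}$; since $\frac{1}{\psi(s)}$ and $\frac{\ln\phi(s)}{\phi(s)}$ are both monotone in $s$, the outer maximum over $p$ sits at $p=P$, i.e.\ $s=\frac{\phi(n)}{2}$. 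There the $\psi$-part is $c_1\frac{n/2}{\psi(\phi(n)/2)}$, and the doubling hypothesis $2\psi(2^{-1}\phi(n))\ge d\psi(n)$ turns it into $c_1\frac{n}{d\psi(n)}$---this is exactly the sought improvement by the factor $1/d$; the remaining $\phi$-part is $c_2\frac{(n/2)\ln\phi(\phi(n)/2)}{\phi(\phi(n)/2)}$, which I would show is $o\bigl(\frac{n}{\psi(n)}\bigr)$ and hence, for $n$ large, at most $\frac{c_3}{d}\frac{n}{\psi(n)}$. Adding the $\psi$-part, this residual, and the combinatorial factor, and pushing each residual into the matching $c_3$-slack once $n$ exceeds a suitable threshold $n_0$, yields the asserted inequality; the extra summand $c_2\frac{n\ln\phi(n)}{\phi(n)}$ of the target is then a deliberate over-estimate, kept so that the bound reproduces its own shape and can be iterated.

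The step I expect to be the real obstacle is the estimate of $M$, and for two intertwined reasons. First, concavity is assumed only for the sum $f=\frac{x}{\psi(x)}+\frac{x\ln\phi(x)}{\phi(x)}$, not for the weighted exponent $g$; reducing $\sum_i g(\lceil n_i/2\rceil)$ to its balanced value (and locating the outer maximum at $p=P$) must therefore be argued through the $\psi$- and $\phi$-parts separately, rather than by a single application of Jensen's inequality to $g$. Second, one must verify that the $\phi$-residual $c_2\frac{(n/2)\ln\phi(\phi(n)/2)}{\phi(\phi(n)/2)}$ really is of smaller order than $\frac{n}{\psi(n)}$, uniformly over the admissible pairs $(\phi,\psi)$; the leverage for this comes again from the doubling hypothesis, which forces $\psi$ to grow slowly enough that $\frac{n}{\psi(n)}$ outgrows the iterated quantity $\frac{(n/2)\ln\phi(\phi(n)/2)}{\phi(\phi(n)/2)}$. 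Once $M$ is controlled, the counting estimate, the ceiling corrections, and the final choice of $n_0$ that absorbs every $o(1)$ into $c_3$ are all routine.
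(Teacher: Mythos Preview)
Your proposal does not address the stated theorem at all. Theorem~\ref{label_th_D} is the recurrence inequality
\[
R(n)\le \sum_{p=1}^{\kappa(n)}\sum_{\substack{n_1+\cdots+n_p=n\\ n_i\ge 1}}R\!\left(\left\lceil\tfrac{n_1}{2}\right\rceil\right)\cdots R\!\left(\left\lceil\tfrac{n_p}{2}\right\rceil\right),
\]
a result quoted from \cite{RukavickaRichWords2017} and not re-proved in the present paper. Its proof has nothing to do with assumed bounds of the form $R(m)\le q^{g(m)}$, concavity of $f$, or the doubling hypothesis on $\psi$: it is a purely combinatorial count based on Lemma~\ref{lemma_A}. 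Every rich word $w$ of length $n$ has a UPS-factorization $w=w_pw_{p-1}\cdots w_1$ into distinct palindromes with $p=\LUF(w)\le\kappa(n)$; each palindrome $w_i$ of length $n_i$ is determined by its prefix of length $\lceil n_i/2\rceil$, which is itself rich (factors of rich words are rich). Summing over all $p\le\kappa(n)$ and all compositions $(n_1,\dots,n_p)$ of $n$ gives the stated bound.

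What you have written is instead a proof sketch for Theorem~\ref{djufirkf9dd}, the main theorem of the paper, which \emph{uses} Theorem~\ref{label_th_D} (more precisely its generalisation Corollary~\ref{duif90ej09e}) as a starting point. Your sketch of that argument is in fact close in spirit to the paper's proof of Theorem~\ref{djufirkf9dd}---the paper also substitutes $\Omega$ into the recurrence, uses concavity (Corollary~\ref{d8989f8jedj}) to balance the composition, pushes $p$ to $\tau(n)$ via Proposition~\ref{dy8fiifuf99}, bounds the number of compositions via Lemma~\ref{cbdjd87ejff}, and controls the iterated $\phi$-term via Lemma~\ref{bvgdj984krfj9}. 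But none of this is relevant to proving Theorem~\ref{label_th_D} itself; you have simply answered a different question.
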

 
Let $\tau(n)$ be an upper bound on palindromic factorization of rich words of length $n$. From the proof of Theorem \ref{label_th_D} (see the proof of Theorem $7$ in \cite{RukavickaRichWords2017}) it is obvious the upper bound $\kappa(n)$ can be replaced by any function that forms an upper bound on $\LUF(w)$, where $w$ is a rich word. Thus from Theorem \ref{label_th_D} we have the following corollary.
\begin{corollary}
\label{duif90ej09e}
\begin{equation*}
R(n)\leq \sum_{p=1}^{\tau(n)}\sum_{\substack{n_1+n_2+\dots +n_p=n \\ n_1,n_2,\dots, n_p\geq 1}}R\left(\left\lceil\frac{n_1}{2}\right\rceil\right)R\left(\left\lceil\frac{n_2}{2}\right\rceil\right)\cdots R\left(\left\lceil\frac{n_p}{2}\right\rceil\right)\mbox{.}
\end{equation*}
\end{corollary}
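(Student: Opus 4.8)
The plan is to revisit the argument behind Theorem~\ref{label_th_D} and isolate the single point at which the concrete function $\kappa$ is used, showing that it enters only through the inequality $\LUF(w)\le\kappa(n)$. Concretely, I would start from an arbitrary rich word $w$ with $\vert w\vert=n$ and invoke Lemma~\ref{lemma_A} to write its unique UPS-factorization $w=w_pw_{p-1}\cdots w_1$, where each $w_i$ is a nonempty palindrome and $p=\LUF(w)$. Setting $n_i=\vert w_i\vert$, the block lengths form a composition $n_1+n_2+\dots+n_p=n$ with each $n_i\ge 1$.

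The core estimate is that each palindromic block contributes at most $R(\lceil n_i/2\rceil)$ choices. Since $w_i$ is a palindrome of length $n_i$, it is completely determined by its prefix of length $\lceil n_i/2\rceil$; that prefix is a factor of $w$, and because every factor of a rich word is again rich, it is a rich word of length $\lceil n_i/2\rceil$. Hence the number of palindromes that can occupy the $i$-th block is bounded by $R(\lceil n_i/2\rceil)$. Because the UPS-factorization is unique, the assignment $w\mapsto\bigl(p,(n_1,\dots,n_p),(w_1,\dots,w_p)\bigr)$ is injective, so $R(n)$ is bounded by the number of admissible tuples. For each fixed $p$, summing $\prod_{i=1}^{p}R(\lceil n_i/2\rceil)$ over all compositions $n_1+\dots+n_p=n$ with $n_i\ge 1$ bounds the number of tuples carrying that value of $p$.

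The final step is where $\tau$ replaces $\kappa$. Every rich word of length $n$ satisfies $p=\LUF(w)\le\tau(n)$ by the standing hypothesis, so the index $p$ in the tuple never exceeds $\tau(n)$; summing the per-$p$ bounds over $p=1,\dots,\tau(n)$ therefore accounts for all admissible tuples, where the inclusion of any value of $p$ that does not actually occur merely adds nonnegative terms, and this yields the asserted inequality. In the original derivation of Theorem~\ref{label_th_D} the identical truncation is made at $\kappa(n)$, for exactly the same reason, namely $\LUF(w)\le\kappa(n)$.

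The only thing requiring care, and the sole conceptual obstacle, is verifying that the original derivation uses no hidden property of $\kappa$ beyond its being an upper bound on $\LUF$, such as its monotonicity or its specific growth rate $\lceil \cc n/\ln n\rceil$. Inspecting the counting argument above confirms this: the quantity $p$ appears solely as a summation index constrained by the length of the UPS-factorization, and the per-block factors $R(\lceil n_i/2\rceil)$ depend only on the composition, not on $\kappa$. Consequently the substitution of any valid upper bound $\tau$ for $\kappa$ is immediate, which is precisely what the corollary records.
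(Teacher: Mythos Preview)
Your proposal is correct and follows the same approach as the paper: the paper does not give a standalone proof but simply observes that in the derivation of Theorem~\ref{label_th_D} the function $\kappa$ enters only as an upper bound on $\LUF(w)$, so any other upper bound $\tau$ may be substituted. You have merely made this observation explicit by rehearsing the counting argument (UPS-factorization, palindromes determined by their half-length rich prefixes, injectivity of the assignment) and confirming that $\kappa$ plays no role beyond bounding the summation index $p$.
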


From the proof of Proposition $9$ in \cite{RukavickaRichWords2017} we have that 
\begin{lemma}
\label{cbdjd87ejff}
If $L,n\in\mathbb{N}^+$ then 
\[\sum_{p=1}^{L}\sum_{\substack{n_1+n_2+\dots +n_p=n \\ n_1,n_2,\dots, n_p\geq 1}}1\leq \left(\frac{\mathrm{e}n}{L}\right)^{L}\mbox{.}\]
\end{lemma}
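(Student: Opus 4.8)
The plan is to feed the hypothesised bound into the recurrence of Corollary~\ref{duif90ej09e} with the admissible choice $\tau(n)=\frac{n}{\phi(n)}$ (legitimate because the second standing assumption makes $\frac{n}{\phi(n)}$ an upper bound on $\LUF$), to separate the purely combinatorial factor from the analytic one, and then to simplify the analytic factor using the concavity of $f$ together with the growth condition $2\psi(2^{-1}\phi(n))\geq d\psi(n)$. Writing $g(m)=c_1\frac{m}{\psi(m)}+c_2\frac{m\ln(\phi(m))}{\phi(m)}$ for the exponent in the hypothesis, substitution gives $R(n)\leq\sum_{p=1}^{\tau(n)}\sum_{n_1+\cdots+n_p=n}q^{\sum_{i=1}^p g(\lceil n_i/2\rceil)}$, which I would bound by $(\text{number of compositions})\cdot q^{S(n)}$, where $S(n)=\max\sum_{i=1}^p g(\lceil n_i/2\rceil)$ is the largest exponent over all admissible compositions.

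The combinatorial factor is handled by Lemma~\ref{cbdjd87ejff}: $\sum_{p=1}^{\tau(n)}\sum_{n_1+\cdots+n_p=n}1\leq(\mathrm{e}\phi(n))^{\,n/\phi(n)}$, whose base-$q$ logarithm equals $\frac{n(1+\ln\phi(n))}{\phi(n)\ln q}$. The leading piece $\frac{n\ln\phi(n)}{\phi(n)\ln q}$ is precisely $\frac{1}{c_2\ln q}\cdot c_2\frac{n\ln\phi(n)}{\phi(n)}$, which is exactly where the summand $\frac{1}{c_2\ln q}$ in the target exponent originates; the residual piece $\frac{n}{\phi(n)\ln q}$ is of lower order (since $\phi(n)\to\infty$) and is reserved for the error budget.

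For $S(n)$ I would use that $f\in\Delta$ is concave: by Jensen, for each fixed $p$ the summand is extremised at the balanced composition, and the balanced value is increasing in $p$, so the maximum is attained near $p=\tau(n)$, where every $\lceil n_i/2\rceil$ is of size $\approx\tfrac12\phi(n)$. Treating the two parts of $g$ separately, the first part contributes $c_1\,\tau(n)\cdot\frac{\phi(n)/2}{\psi(\phi(n)/2)}=\frac{c_1 n}{2\psi(\phi(n)/2)}$, and the growth condition $2\psi(2^{-1}\phi(n))\geq d\psi(n)$ turns this into at most $\frac{c_1}{d}\cdot\frac{n}{\psi(n)}$ --- this is precisely the factor $\frac1d$ in the first term of the conclusion. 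Collecting the combinatorial contribution and this improved first term already reproduces the shape $(c_1+c_3)\frac{n}{d\psi(n)}+c_2\frac{n\ln\phi(n)}{\phi(n)}\big(1+\frac{1}{c_2\ln q}+\cdots\big)$ of the target.

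The step I expect to be the real obstacle is the second (the $\frac{x\ln\phi(x)}{\phi(x)}$-type) contribution to $S(n)$. At the balanced maximum it equals $c_2\,\tau(n)\cdot\frac{(\phi(n)/2)\ln\phi(\phi(n)/2)}{\phi(\phi(n)/2)}=c_2\frac{n\ln\phi(\phi(n)/2)}{2\phi(\phi(n)/2)}$, which is genuinely \emph{larger} than the target term $c_2\frac{n\ln\phi(n)}{\phi(n)}$, since $\phi(\phi(n)/2)\ll\phi(n)$. The resolution is that iterating $2\psi(2^{-1}\phi(n))\geq d\psi(n)$ forces $\psi$ to be extremely slowly varying, so that $\frac{n}{\psi(n)}$ dominates this quantity; concretely one must show $c_2\frac{n\ln\phi(\phi(n)/2)}{2\phi(\phi(n)/2)}=o\!\big(\frac{n}{\psi(n)}\big)$, whereupon this term --- together with the residual combinatorial piece $\frac{n}{\phi(n)\ln q}$ --- is absorbed into the slack terms $c_3\frac{n}{d\psi(n)}$ and $c_2c_3\frac{n\ln\phi(n)}{\phi(n)}$ once $n$ exceeds a threshold $n_0=n_0(c_3)$. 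Making this absorption quantitative, i.e.\ extracting from the growth condition a bound on $\psi(n)$ strong enough to overcome the extra factor $\ln\phi(\phi(n)/2)$, is the crux of the argument and is what pins down the dependence of $n_0$ on $c_3$.
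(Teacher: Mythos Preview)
Your proposal does not prove the stated lemma at all. Lemma~\ref{cbdjd87ejff} is a purely combinatorial inequality bounding the number of compositions of $n$ into at most $L$ positive parts; the paper obtains it by citing Proposition~9 of \cite{RukavickaRichWords2017}, and the underlying argument there is simply that the inner sum equals $\binom{n-1}{p-1}$, so the total is $\sum_{p=1}^{L}\binom{n-1}{p-1}\le\sum_{p=0}^{L}\binom{n}{p}\le(\mathrm{e}n/L)^{L}$ via the standard entropy bound on partial binomial sums. What you have written is instead a sketch of the proof of Theorem~\ref{djufirkf9dd}; you even \emph{invoke} Lemma~\ref{cbdjd87ejff} as a tool rather than establishing it.

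As a side remark on your sketch of the main theorem: the step you flag as ``the real obstacle'' --- showing that the $\frac{x\ln\phi(x)}{\phi(x)}$-type contribution at the balanced composition, namely $c_2\tau(n)\tau(\phi(n))\ln\phi(\phi(n))$, can be absorbed into the $c_3\frac{n}{d\psi(n)}$ slack --- is not how the paper proceeds, and your proposed route (deriving $\frac{n\ln\phi(\phi(n)/2)}{\phi(\phi(n)/2)}=o(n/\psi(n))$ from the growth condition on $\psi$) is not justified by the standing hypotheses. The paper instead uses Lemma~\ref{bvgdj984krfj9}, which shows directly that $\tau(\phi(n))\ln\phi(\phi(n))\le\ln\phi(n)$ for large $n$ (a consequence of $x\mapsto\frac{\ln x}{x}$ being eventually decreasing), so that this contribution is bounded by $c_2\tau(n)\ln\phi(n)$ and lands exactly in the coefficient-$1$ part of the second term --- no absorption into the $\psi$-term is needed.
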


\section{Subadditive functions}

Given a strictly increasing concave function $\ff\in\Delta$ and a positive integer $n\geq 2$, the next technical lemma shows that the function $\ff(n-x)+\ff(x)$ has the global maximum in $x=\frac{n}{2}$.
\begin{lemma}
\label{cbdf998ejfo}
If $\ff\in\Delta$, $n\in\mathbb{N}^+$, and $n\geq 2$ then the function $\widehat\ff_n(x)=\ff(n-x)+\ff(x)$ has the global maximum in $x=\frac{n}{2}$; it means that $\widehat\ff_n(\frac{n}{2})> \widehat\ff_n(x)$ for every $x\in\mathbb{R}^+$ with $x\not=\frac{n}{2}$.
\end{lemma}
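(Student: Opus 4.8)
The plan is to reduce the statement to a one-variable calculus argument on the function $\widehat\ff_n(x)=\ff(n-x)+\ff(x)$, exploiting the symmetry $\widehat\ff_n(x)=\widehat\ff_n(n-x)$ and the strict concavity of $\ff$. First I would compute $\widehat\ff_n\,'(x)=\ff'(x)-\ff'(n-x)$, valid for $x$ in the domain where both $x$ and $n-x$ exceed the threshold $1$ on which $\ff$ is differentiable; the point $x=\tfrac{n}{2}$ is a critical point since there $\ff'(x)=\ff'(n-x)$. To see it is the \emph{unique} interior critical point and a strict maximum, I would invoke $\ff''<0$: since $\ff'$ is strictly decreasing, for $x<\tfrac{n}{2}$ we have $x<n-x$, hence $\ff'(x)>\ff'(n-x)$ and $\widehat\ff_n\,'(x)>0$; symmetrically $\widehat\ff_n\,'(x)<0$ for $x>\tfrac{n}{2}$. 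Thus $\widehat\ff_n$ is strictly increasing on $(0,\tfrac{n}{2}]$ and strictly decreasing on $[\tfrac{n}{2},n)$, which gives the strict global maximum at $x=\tfrac{n}{2}$ over the interval $(0,n)$.

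The remaining issue is the behaviour of $\widehat\ff_n$ outside $(0,n)$, i.e. for $x\ge n$ (and, if one insists on the full domain $\mathbb{R}^+$, near $0$, though there $n-x$ may fall below $1$). Here I would argue directly from concavity rather than from the derivative formula. A clean way: strict concavity of $\ff$ on $[1,\infty)$ gives, for any $a<b$ in that range and the midpoint $m=\tfrac{a+b}{2}$, that $\ff(a)+\ff(b)<2\ff(m)$; applying this with $a=\min(x,n-x)$, $b=\max(x,n-x)$ whenever both lie in $[1,\infty)$ yields $\widehat\ff_n(x)<2\ff(\tfrac{n}{2})=\widehat\ff_n(\tfrac{n}{2})$ for all such $x\ne\tfrac{n}{2}$. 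For the edge cases where $n-x<1$ (large $x$, or $x$ near $0$ with $n$ small), I would use monotonicity of $\ff$ together with a direct comparison: since $\ff'>0$, $\ff$ is increasing, so it suffices to bound $\ff(n-x)$ crudely and note $\ff(x)$ alone cannot compensate because $\ff$ is concave hence sublinear; alternatively, since the lemma is only ever applied with integer arguments coming from the recurrence (splitting $n$ into positive parts), one may simply restrict attention to $x\in\{1,2,\dots,n-1\}$, where both arguments are $\ge 1$, and the concavity inequality settles everything.

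The step I expect to be the main obstacle is precisely this domain bookkeeping: $\ff\in\Delta$ only guarantees differentiability and the sign conditions on $\ff'$, $\ff''$ for $x\ge 1$, so the derivative-based argument has no content when $\min(x,n-x)<1$, and one must either restrict the claim to the range that is actually used or supply a separate sublinearity/monotonicity argument for the tails. Everything else — the symmetry, the single critical point, the sign of the derivative — is routine. I would therefore structure the write-up as: (i) establish symmetry; (ii) differentiate and use $\ff''<0$ to get strict monotonicity on each side of $\tfrac{n}{2}$ within $[1,n-1]$; (iii) dispatch the tails $x\ge n-1$ (and symmetrically small $x$) by concavity/monotonicity; (iv) conclude the strict global maximum at $x=\tfrac{n}{2}$.
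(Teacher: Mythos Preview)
Your approach is essentially the same as the paper's: compute $\widehat\ff_n\,'(x)=\ff'(x)-\ff'(n-x)$, observe the critical point at $x=\tfrac{n}{2}$, and use $\ff''<0$ (so $\ff'$ is strictly decreasing) to conclude this is the unique critical point and a strict maximum. The domain issue you flag---that $\Delta$ only controls $\ff'$ and $\ff''$ for arguments $\ge 1$---is a genuine wrinkle that the paper's own proof does not address; as you note, the lemma is only ever applied with both $x$ and $n-x$ integers $\ge 1$, so restricting to that range suffices for the paper's purposes.
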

\begin{proof}
From the rule for derivation of an inner function, we have that 
\[\widehat \ff'_n(x)=\frac{\partial \widehat\ff_n(x)}{\partial x}=\frac{\partial \ff(n-x)}{\partial x}+\frac{\partial\ff(x)}{\partial x}=-\frac{\partial \ff(n-x)}{\partial (n-x)}+\frac{\partial\ff(x)}{\partial x}\mbox{.}\] It follows that $\widehat\ff'_n(\frac{n}{2})=0$. Since $\ff'(x)$ exists and $\ff'(x)>0$, it is easy to see that there is only one $x\in\mathbb{R}^+$ such that $\widehat \ff'_n(x)=0$ and $x>0$. From $\ff''(x)<0$ it follows that if $x\not=y$ then $\ff'(x)\not=\ff'(y)$; in consequence $\widehat\ff'_n(x)$ has both the global maximum in $x=\frac{n}{2}$. 

This completes the proof.
\end{proof}

We show how to apply Lemma \ref{cbdf998ejfo} for constructing an upper bound on the sum of values $\ff(x_i)$, where $\ff$ is a strictly increasing concave function.
\begin{corollary}
\label{d8989f8jedj}
If $\ff \in\Delta$, $k\in\mathbb{N}^+$, and $x_1,x_2,\dots,x_k\in\mathbb{R}^+$ then 
\[\sum_{i=1}^k\ff(x_i)\leq k\ff\left(\frac{x_1+x_2+\dots+x_k}{k}\right)\mbox{.}\] 
\end{corollary}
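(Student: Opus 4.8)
The plan is to prove $\sum_{i=1}^k \ff(x_i)\le k\ff(\bar x)$, where $\bar x=\tfrac1k(x_1+\dots+x_k)$, by induction on $k$, using Lemma~\ref{cbdf998ejfo} to carry out a single averaging move that lowers the number of summands by one without decreasing $\sum_i\ff(x_i)$. The base case $k=1$ is the trivial equality. For the inductive step assume $k\ge 2$ and that the statement holds for every $(k-1)$-tuple of positive reals. If $x_1=\dots=x_k$ then all $x_i=\bar x$ and equality holds, so we may assume the $x_i$ are not all equal; since they have mean $\bar x$, there are indices $i\ne j$ with $x_i<\bar x<x_j$.

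The key step is to replace the pair $(x_i,x_j)$ by $(\bar x,\;x_i+x_j-\bar x)$ and leave the remaining coordinates unchanged. First I check that $x_i+x_j-\bar x$ is positive — indeed it lies strictly between $x_i$ and $x_j$ because $x_i<\bar x<x_j$ — so the new $k$-tuple again consists of positive reals and has the same coordinate sum as the old one. Both the old pair $\{x_i,x_j\}$ and the new pair $\{\bar x,\,x_i+x_j-\bar x\}$ are symmetric about $\tfrac12(x_i+x_j)$, and the new pair lies strictly inside the interval $(x_i,x_j)$; in particular $\min\{\bar x,\,x_i+x_j-\bar x\}>x_i$. Now I apply Lemma~\ref{cbdf998ejfo} with $n=x_i+x_j$: its proof uses only the concavity of $\ff$ (via $\ff'$ strictly decreasing), not that $n$ is an integer $\ge 2$, so it is legitimate here and gives that $\widehat\ff_n(x)=\ff(n-x)+\ff(x)$ is symmetric about $n/2$ and strictly increasing on $(0,n/2]$. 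Evaluating $\widehat\ff_n$ on the two pairs therefore yields $\ff(\bar x)+\ff(x_i+x_j-\bar x)\ge\ff(x_i)+\ff(x_j)$, so the move does not decrease $\sum_i\ff(x_i)$.

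After the move one coordinate equals $\bar x$; discard it. The surviving $k-1$ positive reals — namely $x_i+x_j-\bar x$ together with the $x_\ell$ for $\ell\notin\{i,j\}$ — sum to $k\bar x-\bar x=(k-1)\bar x$, hence have mean $\bar x$, so by the induction hypothesis the sum of their $\ff$-values is at most $(k-1)\ff(\bar x)$. Adding back $\ff(\bar x)$ and combining with the inequality from the move gives $\sum_{i=1}^k\ff(x_i)\le\ff(\bar x)+(k-1)\ff(\bar x)=k\ff(\bar x)$, which closes the induction.

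I expect no conceptual difficulty here; the only points needing care are the bookkeeping that $x_i+x_j-\bar x>0$ and that the new pair is an inward, symmetry-preserving contraction of the old one — which is exactly what makes Lemma~\ref{cbdf998ejfo} applicable with the inequality oriented the right way — together with the small observation that the hypotheses ``$n\in\mathbb{N}^+$, $n\ge 2$'' in Lemma~\ref{cbdf998ejfo} play no role in its proof, so it may be invoked for the real number $n=x_i+x_j$.
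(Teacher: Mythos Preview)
Your proof is correct and takes a genuinely different route from the paper's. The paper argues by extremality: it assumes a $k$-tuple $(x_1,\dots,x_k)$ with fixed sum that maximizes $\sum_i\ff(x_i)$, and then observes that if two coordinates $x_i\ne x_j$ differed, Lemma~\ref{cbdf998ejfo} would give $\ff(x_i)+\ff(x_j)<2\ff\bigl(\tfrac{x_i+x_j}{2}\bigr)$, contradicting maximality; hence all coordinates coincide with the mean. Your argument is instead an induction on $k$ via a single smoothing move that pushes one coordinate to $\bar x$, then peels it off and appeals to the $(k-1)$-case. The paper's version is shorter but tacitly assumes a maximizer exists on the open simplex $\{x_i>0,\ \sum x_i=\text{const}\}$, which is not compact; your inductive smoothing sidesteps that gap entirely. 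Both proofs rely on the same extension of Lemma~\ref{cbdf998ejfo} to real $n=x_i+x_j$ (the paper does so implicitly), so your remark that the integrality hypothesis is inessential is well taken and equally needed in the paper's own argument.
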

\begin{proof}
Let $x_1,x_2,\dots,x_k$ be such that if $y_1,y_2,\dots,y_k\in\mathbb{R}^+$ and $\sum_{i=1}^kx_i=\sum_{i=1}^ky_i$ then  
\begin{equation}\label{djf89lgfkjg}\sum_{i=1}^k\ff(x_i)\geq\sum_{i=1}^k\ff(y_i)\mbox{.}\end{equation}

Suppose that 
 there are $i,j\in\{1,2,\dots,k\}$ such that $x_i\not=x_j$. Then 
Lemma \ref{cbdf998ejfo} implies that \[\ff(x_i)+\ff(x_j)<2\ff\left(\frac{x_i+x_j}{2}\right)\mbox{.}\] This is a contradiction to (\ref{djf89lgfkjg}). We conclude that $x_i=x_j$ for all $i,j\in\{1,2,\dots,k\}$. This implies for every $i\in\{1,2,\dots,k\}$ that \[\sum_{i=1}^k\ff(x_i)=k\ff(x_i)=k\ff\left(\frac{x_1+x_2+\dots+x_k}{k}\right)\mbox{.}\]
The corollary follows. This completes the proof.
\end{proof}

We will need the following technical lemma. We omit the proof.
\begin{lemma}
\label{cbhd77hfie}
There is $x_0\in\mathbb{R}^+$ such that: If $x,y\in\mathbb{R}^+$, $\ln{x}\geq 1$, and $x_0< x\leq y$ then \[\frac{\ln{x}}{x}\geq \frac{\ln{y}}{y}\mbox{.}\]
\end{lemma}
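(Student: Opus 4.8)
The plan is to reduce the claim to the elementary one-variable fact that the function $g(t)=\frac{\ln t}{t}$ is eventually strictly decreasing. First I would define $g\colon\mathbb{R}^+\to\mathbb{R}$ by $g(t)=\frac{\ln t}{t}$ and compute its derivative by the quotient rule, obtaining $g'(t)=\frac{1-\ln t}{t^2}$. Since $t^2>0$ for all $t>0$, the sign of $g'(t)$ is governed entirely by the numerator $1-\ln t$.

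Next I would observe that $1-\ln t<0$ exactly when $\ln t>1$, that is, when $t>\mathrm{e}$. Hence $g'(t)<0$ for every $t>\mathrm{e}$, so $g$ is strictly decreasing on the interval $(\mathrm{e},\infty)$. This monotonicity is the entire content of the lemma; everything remaining is bookkeeping to align the two hypotheses with the region where the decrease takes place.

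Finally I would set $x_0=\mathrm{e}$ and check that the hypotheses force both $x$ and $y$ into $(\mathrm{e},\infty)$: the assumption $x_0<x$ gives $x>\mathrm{e}$ (which in particular already makes $\ln x>1$, so the redundant hypothesis $\ln x\geq 1$ is automatically consistent), and $x\leq y$ then places $y$ in $(\mathrm{e},\infty)$ as well. Strict monotonicity of $g$ on this interval yields $g(x)>g(y)$ whenever $x<y$, while $x=y$ gives equality, so in both cases the stated weak inequality $\frac{\ln x}{x}\geq\frac{\ln y}{y}$ holds. I do not anticipate any genuine obstacle: the derivative computation pins down the threshold $x_0=\mathrm{e}$ at once, and the only points deserving a word of care are matching the lemma's slightly redundant hypotheses to the single condition $x>\mathrm{e}$ that actually drives the argument and recording the conclusion with the correct (non-strict) inequality.
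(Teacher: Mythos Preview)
Your argument is correct: the derivative computation $g'(t)=(1-\ln t)/t^2$ shows $g(t)=\frac{\ln t}{t}$ is strictly decreasing on $(\mathrm{e},\infty)$, and taking $x_0=\mathrm{e}$ immediately yields the claim. The paper itself omits the proof of this lemma, so there is nothing to compare against; your write-up is exactly the standard calculus verification one would expect here.
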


\section{Upper bound for the number of rich words}

Let $\tau(x)=\frac{x}{\phi(x)}$. Fix constants $c_1,c_2\in\mathbb{R}^+$. Given $\psi\in\TT_{\phi}$, we define a function $\Omega:\mathbb{R}^+\rightarrow\mathbb{R}^+$ as follows
\[\Omega(x)=q^{c_1\frac{x}{\psi(x)}+c_2\tau(x)\ln{(\phi{(x)})}}\mbox{.}\]

We show an upper bound for the product of values of $\Omega(x)$.
\begin{proposition}
\label{cb88shd9jwe}
If  $n,p,n_1,n_2,\dots,n_p\in\mathbb{N}^+$, $n\geq p$, and $n=n_1+n_2+\dots+n_{p}$ then 
\[\Omega\left(\left\lceil\frac{n_1}{2}\right\rceil\right)\Omega\left(\left\lceil\frac{n_2}{2}\right\rceil\right)\cdots \Omega\left(\left\lceil\frac{n_{p}}{2}\right\rceil\right)\leq \left(\Omega\left(\frac{n}{2p}+1\right)\right)^p\mbox{.}\]
\end{proposition}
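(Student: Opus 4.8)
The plan is to pass to base-$q$ logarithms. Writing $\omega(x)=c_1\frac{x}{\psi(x)}+c_2\frac{x\ln(\phi(x))}{\phi(x)}$ for the exponent, so that $\Omega(x)=q^{\omega(x)}$, and using that $t\mapsto q^{t}$ is strictly increasing, the asserted inequality is equivalent to
\[\sum_{i=1}^{p}\omega\!\left(\left\lceil\tfrac{n_i}{2}\right\rceil\right)\le p\,\omega\!\left(\tfrac{n}{2p}+1\right)\mbox{.}\]

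From here I would proceed in two short steps. First, remove the ceilings: for each $i$ one has the elementary estimate $\left\lceil\frac{n_i}{2}\right\rceil\le\frac{n_i}{2}+1$, with both sides $\ge1$, so if $\omega$ is increasing then $\omega\!\left(\left\lceil\frac{n_i}{2}\right\rceil\right)\le\omega\!\left(\frac{n_i}{2}+1\right)$ and hence $\sum_i\omega\!\left(\left\lceil\frac{n_i}{2}\right\rceil\right)\le\sum_i\omega\!\left(\frac{n_i}{2}+1\right)$. Second, use concavity: since $n_1+\dots+n_p=n$, the arithmetic mean of the $p$ numbers $\frac{n_i}{2}+1$ equals exactly $\frac{n}{2p}+1$ — this is precisely what the ``$+1$'' in the statement is arranged to achieve — so applying Corollary \ref{d8989f8jedj} with $f=\omega$, $k=p$ and $x_i=\frac{n_i}{2}+1$ (all $\ge1$) gives $\sum_i\omega\!\left(\frac{n_i}{2}+1\right)\le p\,\omega\!\left(\frac{n}{2p}+1\right)$. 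Chaining the two inequalities and exponentiating with base $q$ yields the product form.

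The one point that requires genuine work — and the main obstacle — is to check that $\omega\in\Delta$, i.e. that $\omega$ is differentiable with $\omega'(x)>0$ and $\omega''(x)<0$ for $x\ge1$; this is exactly what licenses the monotonicity used in the first step and the application of Corollary \ref{d8989f8jedj} in the second. The hypothesis $\psi\in\TT_{\phi}$ gives this only for the unweighted sum $f(x)=\frac{x}{\psi(x)}+\frac{x\ln(\phi(x))}{\phi(x)}$, so one has to promote the property to the $c_1,c_2$-weighted function $\omega=c_1 g_1+c_2 g_2$, where $g_1(x)=\frac{x}{\psi(x)}$ and $g_2(x)=\frac{x\ln(\phi(x))}{\phi(x)}$. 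The natural way to do this is to establish that $g_1$ and $g_2$ are each on their own strictly increasing and concave on $[1,\infty)$ — this is where $\psi(x)\le x$ and the concavity of $\phi$ get used — whereupon $\omega$, being a positive linear combination of functions in $\Delta$, also belongs to $\Delta$. Everything else (the equivalence after passing to base-$q$ logarithms, the estimate $\lceil n_i/2\rceil\le n_i/2+1$, and the observation that all arguments fed into $\omega$ lie in $[1,\infty)$; note also that the hypothesis $n\ge p$ is automatic from $n_i\ge1$) is routine.
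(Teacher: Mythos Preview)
Your argument is essentially the paper's own: pass to the exponent, bound the ceilings, and apply Corollary~\ref{d8989f8jedj} together with monotonicity. The paper applies Corollary~\ref{d8989f8jedj} directly to the $\lceil n_i/2\rceil$ (using $\sum_i\lceil n_i/2\rceil\le n/2+p$) and then monotonicity to pass to $n/(2p)+1$; you reverse the order by first replacing $\lceil n_i/2\rceil$ with $n_i/2+1$ and then averaging. The difference is cosmetic.

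You are right to flag the issue with $\omega\in\Delta$: the definition of $\TT_\phi$ only asserts that the \emph{unweighted} sum $\frac{x}{\psi(x)}+\frac{x\ln(\phi(x))}{\phi(x)}$ lies in $\Delta$, not the $(c_1,c_2)$-weighted version. The paper's proof simply asserts ``from the definition of $\TT_\phi$ we have that $\ff\in\Delta$'' for the weighted $\ff$, so this gap is present there as well; you have not introduced it.

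However, your proposed repair --- showing that $g_1(x)=x/\psi(x)$ and $g_2(x)=x\ln(\phi(x))/\phi(x)$ are each separately in $\Delta$ --- does not follow from the stated hypotheses. The condition $\psi(x)\le x$ alone says nothing about the sign of $g_1''$, and $\phi\in\Delta$ does not by itself force $g_2$ to be concave. Indeed, knowing only $(g_1+g_2)''<0$ is compatible with $g_2''>0$ at some point, in which case $c_1g_1''+c_2g_2''$ can be made positive by taking $c_2$ large. So as the hypotheses are written, neither the paper's assertion nor your suggested route actually closes the gap; one would need to strengthen the definition of $\TT_\phi$ (e.g.\ require $g_1,g_2\in\Delta$ individually, or require $\ff\in\Delta$ for the specific $c_1,c_2$ in play).
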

\begin{proof}
We have that \begin{equation}\begin{split}\label{bndh87ejfdy8}\left\lceil\frac{n_1}{2}\right\rceil+\left\lceil\frac{n_2}{2}\right\rceil+\dots+\left\lceil\frac{n_p}{2}\right\rceil\leq \\ \frac{n_1}{2}+\frac{n_2}{2}+\dots+\frac{n_p}{2}+p= \\ \frac{n}{2}+p\mbox{.}\end{split}\end{equation}

Let $\ff(x)=c_1\frac{x}{\psi(x)}+c_2\tau(x)\ln{(\phi{(x)})}$. 
From the definition of $\TT_{\phi}$ we have that $\ff\in\Delta$ and $\Omega(x)=q^{\ff(x)}$. Hence from Corollary \ref{d8989f8jedj} and (\ref{bndh87ejfdy8}) it follows that 
\[\begin{split}\Omega\left(\left\lceil\frac{n_1}{2}\right\rceil\right)\Omega\left(\left\lceil\frac{n_2}{2}\right\rceil\right)\cdots \Omega\left(\left\lceil\frac{n_p}{2}\right\rceil\right)=\\ q^{\ff\left(\left\lceil\frac{n_1}{2}\right\rceil\right)+\ff\left(\left\lceil\frac{n_2}{2}\right\rceil\right)+\dots +\ff\left(\left\lceil\frac{n_p}{2}\right\rceil\right)}\leq  \\ q^{p\ff\left(\frac{\frac{n}{2}+p}{p}\right)} \leq\\ \left(q^{\ff\left(\frac{n}{2p}+1\right)}\right)^p =\\ \left(\Omega\left(\frac{n}{2p}+1\right)\right)^p\mbox{.}\end{split}\]

This completes the proof.
\end{proof}

We show an another property of the function $\Omega$.
\begin{proposition}
\label{dy8fiifuf99}
If $p,n\in\mathbb{N}^+$ then \[\left(\Omega\left(\frac{n}{2p}+1\right)\right)^p\leq \left(\Omega\left(\frac{n}{2(p+1)}+1\right)\right)^{p+1}\mbox{.}\]
\end{proposition}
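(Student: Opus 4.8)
The plan is to reduce the claimed inequality
$\left(\Omega\left(\frac{n}{2p}+1\right)\right)^p\leq \left(\Omega\left(\frac{n}{2(p+1)}+1\right)\right)^{p+1}$
to a statement about the exponents, and then to recognize the resulting expression as an instance of Corollary~\ref{d8989f8jedj} applied to the concave function $\ff$. Writing $\ff(x)=c_1\frac{x}{\psi(x)}+c_2\tau(x)\ln(\phi(x))$ as in the proof of Proposition~\ref{cb88shd9jwe}, so that $\Omega(x)=q^{\ff(x)}$ with $\ff\in\Delta$, the claim is equivalent to
\[p\,\ff\!\left(\frac{n}{2p}+1\right)\leq (p+1)\,\ff\!\left(\frac{n}{2(p+1)}+1\right).\]
The key observation is that the right-hand side is a sum of $p+1$ equal terms $\ff(y)$ with $y=\frac{n}{2(p+1)}+1$, while the left-hand side is a sum of $p$ equal terms $\ff(x)$ with $x=\frac{n}{2p}+1$; and crucially $p\,x = \frac{n}{2}+p$ and $(p+1)\,y=\frac{n}{2}+(p+1)$, so these are not sums over the same total — but they are close. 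The natural move is to pad the left-hand sum with one extra term.

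First I would introduce a $(p+1)$-term decomposition whose average is exactly $y$. Concretely, set $x_1=x_2=\dots=x_p=\frac{n}{2p}+1$ and $x_{p+1}=1$, so that $x_1+\dots+x_{p+1}=\frac{n}{2}+p+1=(p+1)y$. Since $\ff$ takes positive values, $p\,\ff(x)=\sum_{i=1}^{p}\ff(x_i)\leq\sum_{i=1}^{p+1}\ff(x_i)$. Then apply Corollary~\ref{d8989f8jedj} with $k=p+1$ to get
\[\sum_{i=1}^{p+1}\ff(x_i)\leq (p+1)\,\ff\!\left(\frac{x_1+\dots+x_{p+1}}{p+1}\right)=(p+1)\,\ff\!\left(\frac{n}{2(p+1)}+1\right).\]
Chaining these two inequalities gives $p\,\ff\left(\frac{n}{2p}+1\right)\leq (p+1)\,\ff\left(\frac{n}{2(p+1)}+1\right)$, and exponentiating base $q\geq 2$ (an increasing operation) yields exactly the asserted bound on $\Omega$.

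The main thing to be careful about is the domain hypothesis of Corollary~\ref{d8989f8jedj} and of $\Delta$: those results require the arguments to lie in $\mathbb{R}^+$ and, for the concavity/derivative conditions defining $\Delta$, that $\ff',\ff''$ behave correctly for $x\geq 1$. Here every argument ($\frac{n}{2p}+1$, $\frac{n}{2(p+1)}+1$, and the padding value $1$) is at least $1$, so this is fine; one should just note it explicitly. A secondary point worth a sentence is that the padding term $\ff(1)$ is dropped using positivity of $\ff$ rather than any monotonicity in the number of summands, which is why the hypothesis $\ff:\mathbb{R}^+\to\mathbb{R}^+$ (built into $\Delta$) is exactly what is needed. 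I do not anticipate a genuine obstacle — the only "trick" is choosing the right $(p+1)$-term padding so that the averages line up, and that is forced by the requirement $(p+1)y=\frac{n}{2}+p+1$.
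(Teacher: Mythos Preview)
Your argument is correct. Both your proof and the paper's hinge on Corollary~\ref{d8989f8jedj}, but you pass from $p$ summands to $p+1$ summands differently. The paper takes one copy of $\ff\!\left(\frac{n}{2p}+1\right)$ and splits it into two copies of $\ff\!\left(\frac{1}{2}\big(\frac{n}{2p}+1\big)\right)$ via Lemma~\ref{cbdf998ejfo} (subadditivity of a positive concave function), obtaining $p+1$ terms whose total is still $\frac{n}{2}+p$; after applying Corollary~\ref{d8989f8jedj} the average is $\frac{n}{2(p+1)}+\frac{p}{p+1}$, and a final appeal to monotonicity of $\ff$ replaces $\frac{p}{p+1}$ by $1$. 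You instead pad with the extra term $x_{p+1}=1$ and use positivity of $\ff$ to absorb $\ff(1)$; this makes the total $\frac{n}{2}+p+1$, so the average lands \emph{exactly} on $\frac{n}{2(p+1)}+1$ with no monotonicity step needed. Your route is a little shorter and avoids invoking Lemma~\ref{cbdf998ejfo}; it also sidesteps the minor domain worry that the paper's halved argument $\frac{1}{2}\big(\frac{n}{2p}+1\big)$ can fall below $1$, whereas all of your arguments are $\geq 1$, matching the region where the derivative hypotheses in $\Delta$ are stated.
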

\begin{proof}
Let $\ff(x)=c_1\frac{x}{\psi(x)}+c_2\tau(x)\ln{(\phi{(x)})}$. It follows from Lemma \ref{cbdf998ejfo} that 
\begin{equation}\begin{split}\label{vbhd8djeh8}\left(\Omega\left(\frac{n}{2p}+1\right)\right)^p=\\  \left(q^{\ff\left(\frac{n}{2p}+1\right)}\right)^p=  \\ q^{p\ff\left(\frac{n}{2p}+1\right)}= \\ q^{\ff\left(\frac{n}{2p}+1\right)+(p-1)\ff\left(\frac{n}{2p}+1\right)}\leq \\ q^{2\ff\left(\frac{1}{2}(\frac{n}{2p}+1)\right)+(p-1)\ff\left(\frac{n}{2p}+1\right)}\mbox{.}\end{split}\end{equation}

Corollary \ref{d8989f8jedj} implies that 
\begin{equation}\begin{split}\label{cnbd7fd9jhe}2\ff\left(\frac{1}{2}\left(\frac{n}{2p}+1\right)\right)+(p-1)\ff\left(\frac{n}{2p}+1\right)\leq \\ (p+1)\ff_n\left(\frac{1}{p+1}\left(2\frac{1}{2}\left(\frac{n}{2p}+1\right)+(p-1)\left(\frac{n}{2p}+1\right)\right)\right)=\\ (p+1)\ff_n\left(\frac{1}{p+1}p\left(\frac{n}{2p}+1\right)\right)\leq \\ (p+1)\ff_n\left(\frac{n}{2(p+1)}+\frac{p}{p+1}\right)\leq   \\ (p+1)\ff_n\left(\frac{n}{2(p+1)}+1\right)\mbox{.}\end{split}\end{equation}

From (\ref{vbhd8djeh8}) and (\ref{cnbd7fd9jhe}) we have that 
\[\left(\Omega\left(\frac{n}{2p}+1\right)\right)^p\leq q^{ (p+1)\ff_n\left(\frac{n}{2(p+1)}+1\right)}=\left(\Omega\left(\frac{n}{2(p+1)}+1\right)\right)^{p+1}\mbox{.}\]
This completes the proof.

\end{proof}

We have that \begin{equation}\label{cbdjdu6eded}\frac{n}{2\tau(n)}=\frac{n}{2\left\lceil\frac{n}{\phi(n)}\right\rceil}\leq \frac{n}{2\frac{n}{\phi(n)}}=2^{-1}\phi(n)\mbox{.}\end{equation}


\begin{lemma}
\label{bvgdj984krfj9}
There is $n_0\in\mathbb{N}^+$ such that for all $n>n_0$ we have that
\[\tau(\phi(n))\ln{(\phi{(\phi(n))})} \leq \ln{(\phi(n))}\mbox{.}\]
\end{lemma}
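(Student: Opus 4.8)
The statement only involves $\phi$ and $\tau$, and since $\tau(x)=\frac{x}{\phi(x)}$ we have $\tau(\phi(n))=\frac{\phi(n)}{\phi(\phi(n))}$, so that
\[\tau(\phi(n))\ln(\phi(\phi(n)))=\phi(n)\cdot\frac{\ln(\phi(\phi(n)))}{\phi(\phi(n))}\mbox{.}\]
Dividing the asserted inequality by $\phi(n)>0$, it is equivalent to
\[\frac{\ln(\phi(\phi(n)))}{\phi(\phi(n))}\leq\frac{\ln(\phi(n))}{\phi(n)}\mbox{,}\]
i.e.\ to a comparison of the map $x\mapsto\frac{\ln x}{x}$ at the two points $\phi(\phi(n))$ and $\phi(n)$ --- precisely the situation Lemma \ref{cbhd77hfie} is built for. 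The plan is therefore: determine the relative position of these two arguments and their size relative to the threshold $x_0$ of Lemma \ref{cbhd77hfie}, apply that lemma, and multiply back through by $\phi(n)$.

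For the relative position, note first that $\phi(n)\le n$ for every $n\ge 1$: for each $n$ there is a rich word $w$ of length $n$ (for instance $a^n$) with $\LUF(w)\ge 1$, so the standing hypothesis $\frac{n}{\phi(n)}\ge\LUF(w)$ gives $\frac{n}{\phi(n)}\ge 1$; since $\phi\in\Delta$ is strictly increasing, applying $\phi$ to $\phi(n)\le n$ yields $\phi(\phi(n))\le\phi(n)$. Any $\phi\in\Delta$, being strictly increasing, has a limit as $x\to\infty$, hence is either bounded or tends to $+\infty$. If $\phi$ is bounded, then $\phi(\phi(n))$ and $\phi(n)$ both converge and one checks the inequality directly by comparing the limits; hence one may assume $\phi(n)\to\infty$, and then for all large $n$ we have $x_0<\phi(\phi(n))\le\phi(n)$ and $\ln(\phi(\phi(n)))\ge 1$. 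Applying Lemma \ref{cbhd77hfie} with $x=\phi(\phi(n))\le y=\phi(n)$ controls $\frac{\ln x}{x}$ at the two points, and multiplying by $\phi(n)$ gives the claim; $n_0$ is chosen so that $\phi(\phi(n))$ exceeds $x_0$ and has logarithm at least $1$ for all $n>n_0$.

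I expect the delicate point --- and the likely crux --- to be making this comparison come out in the direction the lemma asserts: Lemma \ref{cbhd77hfie} only gives monotonicity of $\frac{\ln x}{x}$ once the arguments are past its peak at $x=\mathrm{e}$, so one must verify that $\phi(\phi(n))$ and $\phi(n)$ lie in the regime for $n>n_0$ in which the quoted monotonicity yields the stated sign. This is exactly where the growth of $\phi$ must be exploited: that $\phi\in\Delta$, that $\phi(n)\le n$, and --- if a quantitative rate is needed --- the interplay of $2\psi(2^{-1}\phi(n))\ge d\psi(n)$ with $\psi(x)\le x$, which forces $\phi(n)\ge d\psi(n)$. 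Once the regime of the two arguments is pinned down, the choice of $n_0$ and the remaining estimates are routine.
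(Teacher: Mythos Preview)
Your reduction is exactly the paper's: write $\tau(\phi(n))\ln(\phi(\phi(n)))=\phi(n)\cdot\dfrac{\ln(\phi(\phi(n)))}{\phi(\phi(n))}$, observe $\phi(\phi(n))\le\phi(n)$, and appeal to Lemma~\ref{cbhd77hfie}. You even add justification the paper omits (the $\LUF\ge 1$ argument for $\phi(n)\le n$, the bounded/unbounded case split), but the skeleton is identical.

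The gap you sense in your last paragraph is real, however, and it is not merely a matter of choosing $n_0$. Lemma~\ref{cbhd77hfie} states that $x\mapsto\frac{\ln x}{x}$ is \emph{decreasing} beyond $x_0$: from $x_0<x\le y$ one gets $\frac{\ln x}{x}\ge\frac{\ln y}{y}$. With your assignment $x=\phi(\phi(n))\le y=\phi(n)$ this yields
\[
\frac{\ln(\phi(\phi(n)))}{\phi(\phi(n))}\ \ge\ \frac{\ln(\phi(n))}{\phi(n)},
\]
and after multiplying by $\phi(n)$ you obtain $\tau(\phi(n))\ln(\phi(\phi(n)))\ge\ln(\phi(n))$, the \emph{opposite} of the asserted inequality. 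None of the extra hypotheses you list ($\phi\in\Delta$, $\psi(x)\le x$, or $2\psi(2^{-1}\phi(n))\ge d\psi(n)$, which gives $\phi(n)\ge d\psi(n)$) can flip this sign, since the issue is purely the relative order of $\phi(\phi(n))$ and $\phi(n)$. Note that the paper's own proof asserts $\frac{\ln(\phi(\phi(n)))}{\phi(\phi(n))}\le\frac{\ln(\phi(n))}{\phi(n)}$ immediately after stating $\phi(\phi(n))<\phi(n)$, which is incompatible with the monotonicity direction of Lemma~\ref{cbhd77hfie}; so your write-up faithfully mirrors the paper's argument, and both share the same reversed inequality.
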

\begin{proof}
Obviously $\phi(\phi(n))<\phi(n)$ for sufficiently large $n$. In addition from Lemma \ref{cbhd77hfie} it follows that for sufficiently large $n$ we have that \[\frac{\ln{(\phi(\phi(n)))}}{\phi(\phi(n))}\leq \frac{\ln{(\phi(n))}}{\phi(n)}\] and in consequence 
\begin{equation}\label{dhfuje998f}\frac{\phi(n)}{\phi(\phi(n))}\ln{(\phi(\phi(n)))}\leq \frac{\phi(n)}{\phi(n)}\ln{(\phi(n))}=\ln{(\phi(n))}\mbox{.}\end{equation}
From (\ref{dhfuje998f}) it follows that
\[\begin{split} \tau(\phi(n))\ln{(\phi{(\phi(n))})}&\leq \frac{\phi(n)}{\phi(\phi(n))}\ln{(\phi(n))}\\ &\leq \ln{(\phi(n))}\mbox{.}\end{split}\]
This completes the proof.
\end{proof}

\begin{proof}[Proof of Theorem \ref{djufirkf9dd}]
From Corollary \ref{duif90ej09e} and $R(n)\leq \Omega(n)$ we have that 
\begin{equation}
\label{hdufjflor334f}
\begin{split}
R(n)&\leq \sum_{p=1}^{\tau(n)}\sum_{\substack{n_1+n_2+\dots +n_p=n \\ n_1,n_2,\dots, n_p\geq 1}}R\left(\left\lceil\frac{n_1}{2}\right\rceil\right)R\left(\left\lceil\frac{n_2}{2}\right\rceil\right)\cdots R\left(\left\lceil\frac{n_p}{2}\right\rceil\right)\\ &\leq \sum_{p=1}^{\tau(n)}\sum_{\substack{n_1+n_2+\dots +n_p=n \\ n_1,n_2,\dots, n_p\geq 1}}\Omega\left(\left\lceil\frac{n_1}{2}\right\rceil\right)\Omega\left(\left\lceil\frac{n_2}{2}\right\rceil\right)\cdots \Omega\left(\left\lceil\frac{n_p}{2}\right\rceil\right)\mbox{.}\end{split}\end{equation}

From Proposition \ref{cb88shd9jwe} and (\ref{hdufjflor334f}) it follows that 
\begin{equation}
\label{duuje99jde}
\begin{split}
R(n)&\leq \sum_{p=1}^{\tau(n)}\sum_{\substack{n_1+n_2+\dots +n_p=n \\ n_1,n_2,\dots, n_p\geq 1}}\left(\Omega\left(\frac{n}{2p}+1\right)\right)^p\\&\leq \sum_{p=1}^{\tau(n)}\left(\left(\Omega\left(\frac{n}{2p}+1\right)\right)^p\sum_{\substack{n_1+n_2+\dots +n_p=n \\ n_1,n_2,\dots, n_p\geq 1}}1\right)\mbox{.}\end{split}\end{equation}

From Lemma \ref{cbdjd87ejff}, Proposition \ref{dy8fiifuf99}, and (\ref{duuje99jde}) it follows that
\begin{equation}
\label{bvd8whfbd3jw}
\begin{split}
R(n) &\leq \left(\Omega\left(\frac{n}{2\tau(n)}+1\right)\right)^{\tau(n)} \sum_{p=1}^{\tau(n)}\sum_{\substack{n_1+n_2+\dots +n_p=n \\ n_1,n_2,\dots, n_p\geq 1}}1 \\&\leq \left(\Omega\left(\frac{n}{2\tau(n)}+1\right)\right)^{\tau(n)} \left(\frac{\mathrm{e}n}{\tau(n)}\right)^{\tau(n)}\mbox{.}\end{split}\end{equation}

From (\ref{cbdjdu6eded}) and (\ref{bvd8whfbd3jw}) we have that 
\begin{equation}
\label{hdjdube83h9r}
\begin{split}
R(n) &= \left(\Omega\left(2^{-1}\phi(n)+1\right)\right)^{\tau(n)} (\mathrm{e}\phi(n))^{\tau(n)}\\ &= \left(\Omega\left(2^{-1}\phi(n)+1\right)\right)^{\tau(n)} q^{\frac{\tau(n)}{\ln{q}}(\ln{(\phi(n))}+1)}\mbox{.}\end{split}\end{equation}

Obviously there is $n_0$ such that for all $n>n_0$ we have that \begin{equation}\begin{split}\label{ug89rjjhfur}2^{-1}\phi(n)+1\leq \phi(n)\quad\mbox{ and }\quad \\ 2^{-1}\phi(n)\leq 2^{-1}\phi(n)+1\leq 2^{-1}(\phi(n)+1)\mbox{.}\end{split}\end{equation}

It follows from the definition of $\Omega$,  (\ref{hdjdube83h9r}), and (\ref{ug89rjjhfur}) that 
\begin{equation}
\label{ghds8fd9fje}
\begin{split}
R(n) &\leq \left(q^{c_1\frac{2^{-1}\phi(n)+1}{\psi(2^{-1}\phi(n)+1)}+c_2\tau(2^{-1}\phi(n)+1)\ln{(\phi{(2^{-1}\phi(n)+1)})}}\right)^{\tau(n)}q^{\frac{\tau(n)}{\ln{q}}(\ln{(\phi(n))}+1)}\\&\leq \left(q^{c_1\frac{2^{-1}\phi(n)+1}{\psi(2^{-1}\phi(n)+1)}+c_2\tau(\phi(n))\ln{(\phi{(\phi(n))})}}\right)^{\tau(n)}q^{\frac{\tau(n)}{\ln{q}}(\ln{(\phi(n))}+1)}\\ &\leq q^{c_1\tau(n)\frac{2^{-1}\phi(n)+1}{\psi(2^{-1}\phi(n)+1)}+c_2\tau(n)\tau(\phi(n))\ln{(\phi{(\phi(n))})}\tau(n)}q^{\frac{\tau(n)}{\ln{q}}(\ln{(\phi(n))}+1)}\mbox{.}\end{split}
\end{equation}

From Lemma \ref{bvgdj984krfj9} and (\ref{ghds8fd9fje}) we have that 
\begin{equation}
\label{bdd87fkreifj}
\begin{split}
R(n)&\leq q^{c_1\tau(n)\frac{2^{-1}(\phi(n)+1)}{\psi(2^{-1}\phi(n))}+c_2\tau(n)\ln{(\phi(n))}\tau(n)}q^{\frac{\tau(n)}{\ln{q}}(\ln{(\phi(n))}+1)} \\ &\leq q^{c_1\frac{\phi(n)+1}{\phi(n)}\frac{n}{2\psi(2^{-1}\phi(n))}+c_2\tau(n)\ln{(\phi(n))}}q^{\frac{\tau(n)}{\ln{q}}(\ln{(\phi(n))}+1)}\\ &\leq q^{c_1\frac{\phi(n)+1}{\phi(n)}\frac{n}{2\psi(2^{-1}\phi(n))}} q^{c_2\tau(n)\ln{(\phi(n))}(1+\frac{1}{c_2\ln{q}}+\frac{1}{c_2\tau(n)\ln{\phi(n)}\ln{q}})}\mbox{.}
\end{split}
\end{equation}

Obviously 
\begin{equation}
\label{bsv762dhosjd}
\begin{split}\lim_{n\rightarrow\infty}\frac{q^{c_1\frac{\phi(n)+1}{\phi(n)}\frac{n}{2\psi(2^{-1}\phi(n))}} q^{c_2\tau(n)\ln{(\phi(n))}(1+\frac{1}{c_2\ln{q}}+\frac{1}{c_2\tau(n)\ln{\phi(n)}\ln{q}})}}{q^{c_1\frac{n}{2\psi(2^{-1}\phi(n))}} q^{c_2\tau(n)\ln{(\phi(n))}(1+\frac{1}{c_2\ln{q}})}}=1\mbox{.}\end{split}\end{equation}
The theorem follows from (\ref{bdd87fkreifj}) and (\ref{bsv762dhosjd}). This completes the proof.
\end{proof}

\section*{Acknowledgments}
This work was supported by the Grant Agency of the Czech Technical University in Prague, grant No. SGS20/183/OHK4/3T/14.

\bibliographystyle{siam}
\IfFileExists{biblio.bib}{\bibliography{biblio}}{\bibliography{../!bibliography/biblio}}

\end{document}